\theoremstyle{plain}
\newtheorem{thm}{\bf Theorem}[section]
\newtheorem{prop}[thm]{\bf Proposition}
\newtheorem{lemma}[thm]{\bf Lemma}
\theoremstyle{definition}
\newtheorem{definition}[thm]{\bf Definition}
\theoremstyle{remark}
\theoremstyle{example}
\numberwithin{equation}{section}
\def\mult{\operatorname{e}}
\def\GL{\operatorname{GL}}
\def \Ker{{\operatorname{Ker}}}
\def \Hom{{\operatorname{Hom}}}
\def \Sym{{\operatorname{Sym}}}
\def \tl{{}^{\textup{t}\negthinspace}}
\def\Bw{\bigwedge}
\def\Tensor{\bigotimes}
\def\tensor{\otimes}
\let\iso=\cong
\def\YY{{\mathbb Y}}
\begin{document}

\title{Partitions of single exterior type} \thanks{The authors thank the
Mathematical Sciences Research Institute, Berkeley CA, where
this work started to take shape, for support and hospitality
during Fall 2012}
\author{Winfried Bruns}
\address{Universit\"at Osnabr\"uck, Institut f\"ur Mathematik, 49069 Osnabr\"uck, Germany}
\email{wbruns@uos.de}
\author{Matteo Varbaro}
\address{Dipartimento di Matematica,
Universit\`a degli Studi di Genova, Italy}
\email{varbaro@dima.unige.it}
\date{}
\subjclass[2000]{13A50, 14M12, 14L30} \keywords{Relations of
minors; Determinantal varieties; Plethysms} \maketitle

\begin{abstract}
We characterize the irreducible representations of the general
linear group $\GL(V)$ that have multiplicity $1$ in the direct
sum of all Schur modules of a given exterior power of $V$.
These have come up in connection with the relations of the
lower order minors of a generic matrix. We show that the
minimal relations conjectured by Bruns, Conca and Varbaro are
exactly those coming from partitions of single exterior type.
\end{abstract}

\section{Introduction}

The main motivation for this note was the desire to provide
further evidence for a conjecture of Conca and the authors
\cite[Conjecture 2.12]{BCV} on the polynomial relations between
the $t$-minors of a generic matrix. With the notation in
\cite{BCV}, let $X=(x_{ij})$ denote an $m\times n$ matrix of
indeterminates over a field $K$ of characteristic $0$, $R=K[X]$
the polynomial ring over the variables $x_{ij}$ and
$A_t\subseteq R$ the $K$-subalgebra of $R$ generated by the
$t$-minors of $X$. With respect to a choice of bases in
$K$-vector spaces $V$ and $W$ of dimension $m$ and $n$,
respectively, one has a natural action of the group
$G=\GL(V)\times \GL(W)$ on $R$, induced by
$$
(A,B)\cdot
X=AXB^{-1} \qquad\forall \ A\in\GL(V),B\in\GL(W).
$$
This action restricts to $A_t$, making $A_t$ a $G$-algebra.
Since the $G$-decomposition of $A_t$ can be deduced from the
work of De Concini, Eisenbud and Procesi \cite{DEP}, it is
natural to exploit such an action. A presentation of $A_t$ as a
quotient of a polynomial ring is provided by the natural
projection
$$
\pi:S_t\rightarrow A_t,
$$
where $S_t=\Sym(\Bw^tV\tensor\Bw^tW^*)$. Also $S_t$ is a
$G$-algebra, and the map $\pi$ is $G$-equivariant. Therefore
the ideal of relations $J_t=\Ker(\pi)$ is a $G$-module as well.

The conjecture \cite[Conjecture 2.12]{BCV} predicts a minimal
list of irreducible $G$-modules generating $J_t$, or, by
Nakayama's lemma, the decomposition of
$$
J_t\tensor_{S_t}K
$$
where we identify $K$ and the residue class field of $R$ with
respect to the irrelevant maximal ideal generated by the
indeterminates. In particular, the conjecture predicts that
$J_t$ is generated in degrees $2$ and $3$.

In the assignment of partitions to Young diagrams and to an
irreducible representation of $\GL(V)$ we follow Weyman
\cite{We}: a partition of nonnegative integers
$\lambda=(\lambda_1,\dots,\lambda_k)$,
$\lambda_1\ge\dots\ge\lambda_k$, is pictorially represented by
$k$ rows of boxes of lengths $\lambda_1,\dots,\lambda_k$ with
coordinates in the fourth quadrant, and a single row of length
$m$ represents $\Bw^m V$. The highest weight of the
representation is then given by the transpose partition
$\tl\lambda$ in which rows and columns are exchanged:
$(\tl\lambda)_i=|\{j:\lambda_j\ge i\}|$. With this convention,
we denote the Schur module associated with the partition
$\lambda$ and the vector space $V$ by $L_\lambda V$.

Because $S_t$ is a quotient of
$$
T_t=\bigoplus_{d\geq
0}\biggl(\Tensor^d\Bigl(\Bw^tV \ \tensor \
\Bw^tW^*\Bigr)\biggr)=\bigoplus_{d\geq
0}\biggl(\Tensor^d\Bw^tV \ \tensor \
\Tensor^d\Bw^tW^*\biggr),
$$
Pieri's rule implies that the irreducible summands of
$J_t\tensor_{S_t}K$ must be of the form
$$
L_{\gamma}V\tensor L_{\lambda}W^*
$$
where $\gamma$ and $\lambda$ are partitions satisfying the
following conditions:
\begin{itemize}
\item[(i)] $\gamma,\lambda\vdash dt$,
\item[(ii)] both $\gamma$ and $\lambda$ have at most $d$
    rows.
\end{itemize}
We call such partitions (or \emph{bipartitions}
$(\gamma|\lambda)$) \emph{$(t,d)$-admissible} (just
$t$-admissible if we do not need to emphasize the degree). In
\cite{BCV} a set $A$ of $(t,2)$-admissible bipartitions
$(\gamma|\lambda)$ and a set $B$ of $(t,3)$-admissible
bipartitions $(\gamma|\lambda)$ were found such that
\begin{equation}
\bigoplus_{(\gamma|\lambda)\in A}L_{\gamma}V\tensor
L_{\lambda}W^* \ \oplus \ \bigoplus_{(\gamma|\lambda)\in
B}L_{\gamma}V\tensor L_{\lambda}W^*\subseteq
J_t\tensor_{S_t}K. \label{contains}
\end{equation}
Conjecture 2.12 in \cite{BCV} states that the inclusion in
Equation \eqref{contains} is an equality. For the convenience
of the reader and since it is crucial for the following  we
recall how $A$ and $B$ are defined.
\begin{itemize}
\item[(i)] For $u\in\{0,\ldots ,t\}$ let:
$$\tau_u=(t+u,t-u).$$
\item[(ii)] For $u\in\{1,\ldots ,\lfloor t/2\rfloor\}$ let
$$\gamma_u = (t+u, \ t+u, \ t-2u) \qquad\mbox{and} \qquad\lambda_u=(t+2u, \ t-u, \ t-u).$$
\item[(iii)] For each $u\in\{2,\ldots ,\lceil t/2\rceil\}$
    let
$$\rho_u = (t+u, \ t+u-1, \ t-2u+1) \qquad\mbox{and} \qquad\sigma_u=(t+2u-1, \ t-u+1, \ t-u).$$
\end{itemize}
With this notation,
\begin{align*}
A&=\bigl\{(\tau_u|\tau_v): 0\le u,v\le t,\  u+v \mbox{ even}, u\neq v\bigr\},\\
B&=\bigl\{(\gamma_u|\lambda_u),(\lambda_u|\gamma_u):1\le u \le \lfloor t/2\rfloor\bigr\}\\
&\qquad\qquad\qquad\qquad\qquad\cup\bigl\{((\rho_v|\sigma_v),(\sigma_v|\rho_v):2\le v\le\lceil
t/2\rceil\bigr\}.
\end{align*}

Note that not all the partitions above are supported by the
underlying vector spaces if their dimensions are too small: a
partition $\lambda$ can only appear in a representation of
$\GL(V)$ if $\lambda_1\le \dim V$. For simplicity we have
passed this point over since it is essentially irrelevant. The
reader is advised to remove all partitions from the statements
that are too large for the vector spaces under consideration.

The decomposition of $S_t$ as a module over the ``big'' group
$$
H=\GL(E)\times\GL(F),\qquad E=\Bw^t V,\quad F=\Bw^t W,
$$
is well known by Cauchy's rule:
\begin{equation}
S_t=\bigoplus_\mu L_\mu E\tensor L_\mu F^* \label{Cauchy}
\end{equation}
where $\mu$ is extended over all partitions. The
$\GL(V)$-decomposition of $L_\mu E$ is an essentially unsolved
plethysm. However, the partitions in the definition of $A$ and
$B$ play a very special role in it, as was already observed in
\cite{BCV}:

\begin{definition}
Let $\lambda\vdash dt$ be $t$-admissible. Then $\lambda$ is
said to be of \emph{single $\Bw^t$-type} $\mu$ if $\mu\vdash d$
is the only partition such that $L_\lambda V$ is a direct
summand of $L_\mu (\Bw^tV)$ and, moreover, has multiplicity $1$
in it. Without specifying $\mu$, notice that $\lambda$ is of
single $\Bw^t$-type if and only if $\lambda$ has multiplicity
$1$ in $\bigoplus_{\alpha\vdash d} L_{\alpha} (\Bw^t V)$.
\end{definition}

In this note we will classify all partitions of single
$\Bw^t$-type (or simply \emph{single exterior type}) and show
that the bi-partitions in the sets $A$ and $B$ are exactly
those of single $\Bw^t$-type that occur in a minimal generating
set of $J_t$. While this observation does certainly not prove
the conjecture in \cite{BCV}, it provides further evidence for
it.

\section{Auxiliary results on partitions}

In this section we discuss two transformations of partitions
that preserve single exterior type. It was already observed in
\cite{BCV} that trivial extensions in the following sense are
irrelevant: if a partition $\tilde \lambda$ arises from a
$t$-admissible partition $\lambda\vdash dt$ by prefixing
$\lambda$ with columns of length $d$, then $\tilde\lambda$ is
called a \emph{trivial extension} of $\lambda$. We quote
\cite[1.16]{BCV} ($\mult_\lambda$ denotes the multiplicity of
$\lambda$):

\begin{prop}\label{propretract}
Let $\mu$ be a partition of $d$ and consider partitions
$\lambda =(\lambda_1,\dots,\lambda_k)\vdash td$ with $k\leq d$
and
$\tilde{\lambda}=(\lambda_1+1,\dots,\lambda_k+1,1,\dots,1)\vdash
dt+d$. If $\dim_{K}V\geq \lambda_1+1$, then
$$
\mult_\lambda \Bigl(L_\mu \Bigl( \Bw^tV\Bigr)\Bigr) =
\mult_{\tilde{\lambda}}\Bigl(L_\mu\Bigl(\Bw^{t+1}V\Bigr)\Bigr).
$$
In particular, $\lambda$ is of single $\Bw^t$-type $\mu$ if and
only if $\tilde\lambda$ is of single $\Bw^{t+1}$-type $\mu$.
\end{prop}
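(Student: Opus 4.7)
My plan is to translate the multiplicity equality into an identity of Schur-function coefficients and prove it by a one-variable plethystic expansion.

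In the Weyman convention used in the paper, the character of $L_\lambda V$ is the Schur polynomial $s_{\tl\lambda}(x_1,\ldots,x_n)$, so the multiplicity in question equals a coefficient in the standard symmetric-function plethysm:
$$\mult_\lambda\bigl(L_\mu(\Bw^t V)\bigr)\;=\;[\,s_{\tl\lambda}\,]\,s_{\tl\mu}[e_t].$$
Writing $\Lambda:=\tl\lambda$ and $M:=\tl\mu$ (so $M\vdash d$, $\Lambda\vdash dt$, with $\Lambda_1\leq d$), a direct inspection shows that the operation $\lambda\mapsto\tilde\lambda$ of prefixing a column of length $d$ corresponds after transposition to prepending $d$ as a new first part: $\tl{\tilde\lambda}=(d,\Lambda_1,\Lambda_2,\ldots)=:\hat\Lambda$. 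The Proposition thus reduces to the symmetric-function identity
$$[\,s_\Lambda\,]\,s_M[e_t]\;=\;[\,s_{\hat\Lambda}\,]\,s_M[e_{t+1}].$$

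To establish this identity I would introduce an auxiliary variable $y$ and split $e_{t+1}(x_1,\ldots,x_n,y)=e_{t+1}(x_1,\ldots,x_n)+y\,e_t(x_1,\ldots,x_n)$. Applying $s_M[\,\cdot\,]$ and expanding by the plethystic addition formula $s_M[A+B]=\sum_{\beta,\gamma}c^M_{\beta\gamma}\,s_\beta[A]\,s_\gamma[B]$ together with the scalar rule $s_\gamma[yf]=y^{|\gamma|}s_\gamma[f]$, the coefficient of the top power $y^d$ is supported entirely on the single term $(\beta,\gamma)=(\emptyset,M)$, producing
$$[\,y^d\,]\,s_M\bigl[e_{t+1}(x_1,\ldots,x_n,y)\bigr]\;=\;s_M\bigl[e_t(x_1,\ldots,x_n)\bigr].$$

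Next I would extract the coefficient of $s_\Lambda(x_1,\ldots,x_n)$ on both sides. Using the branching identity $s_\nu(x_1,\ldots,x_n,y)=\sum_\kappa s_\kappa(x_1,\ldots,x_n)\,s_{\nu/\kappa}(y)$, with $s_{\nu/\kappa}(y)=y^{|\nu/\kappa|}$ when $\nu/\kappa$ is a horizontal strip and zero otherwise, the left-hand side becomes $\sum_\nu [\,s_\nu\,]\,s_M[e_{t+1}]$, the sum running over partitions $\nu\supset\Lambda$ with $\nu/\Lambda$ a horizontal strip of size $d$, while the right-hand side is simply $[\,s_\Lambda\,]\,s_M[e_t]$.

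The final step, which I expect to be the main obstacle, is to show that only $\nu=\hat\Lambda$ contributes to this sum. The essential input is the weight bound $\nu_1\leq d$, valid for every $\nu$ appearing in $s_M[e_{t+1}]$ because all $GL(V)$-weights of $(\Bw^{t+1}V)^{\otimes d}$ have coordinates at most $d$. Writing $r_i$ for the number of boxes of $\nu/\Lambda$ in row $i$, the horizontal-strip condition forces $r_i\leq \Lambda_{i-1}-\Lambda_i$ for $i\geq 2$, while the weight bound forces $r_1\leq d-\Lambda_1$. These bounds telescope to $\sum_i r_i\leq d$; since $\sum_i r_i=d$, every bound is saturated, uniquely pinning down $\nu=(d,\Lambda_1,\Lambda_2,\ldots)=\hat\Lambda$. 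Thus the sum collapses to the single term $[\,s_{\hat\Lambda}\,]\,s_M[e_{t+1}]$, completing the identification.
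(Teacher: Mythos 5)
Your argument is correct. The paper itself offers no proof of this Proposition -- it is quoted verbatim from \cite[1.16]{BCV} -- so there is no internal argument to compare yours against; what you have written is a legitimate self-contained substitute for that citation. Checking the steps: the translation into the identity $[s_\Lambda]\,s_M[e_t]=[s_{\hat\Lambda}]\,s_M[e_{t+1}]$ correctly accounts for the Weyman transposition convention, and prefixing a column of length $d$ does become prepending the part $d$ after transposing (note $\Lambda_1=k\le d$, so $\hat\Lambda$ is a genuine partition). The splitting $e_{t+1}(x,y)=e_{t+1}(x)+y\,e_t(x)$, the plethystic addition formula, and the isolation of the term $(\beta,\gamma)=(\emptyset,M)$ in the coefficient of $y^d$ are all valid, as is the branching over horizontal strips. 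The collapsing step is the heart of the matter and it is sound: every constituent $s_\nu$ of $s_M[e_{t+1}]$ satisfies $\nu_1\le d$ because $\nu$ occurs as a weight of $(\Bw^{t+1}V)^{\otimes d}$, and combined with the interlacing bounds $\nu_i\le\Lambda_{i-1}$ the telescoping count pins down $\nu=(d,\Lambda_1,\Lambda_2,\dots)$ exactly as you say. No positivity of the coefficients is needed, only that every other horizontal-strip extension of $\Lambda$ has vanishing coefficient.

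Two minor points you leave implicit. First, the coefficient $[s_{\hat\Lambda}]\,s_M[e_{t+1}]$ you extract lives in $n+1$ variables $(x_1,\dots,x_n,y)$, whereas the Proposition asserts the multiplicity for $\dim V=n$; this is harmless because $\ell(\hat\Lambda)=\lambda_1+1\le n$ by the hypothesis $\dim_K V\ge\lambda_1+1$, so the coefficient is stable under dropping a variable (this is in fact exactly where that hypothesis enters). Second, the ``in particular'' clause of the statement follows because your identity holds simultaneously for every $\mu\vdash d$; worth one sentence. Neither affects correctness.
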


Next we want to show that a similar result holds for
dualization, in the sense that $\Bw^{n-t} V$, $n=\dim V$, is
dual to $\Bw^t V$ (up to tensoring with the determinant). Let
$\lambda =(\lambda_1,\ldots ,\lambda_k)\vdash td$ be
$t$-admissible; then we set
$$
\lambda^{*,n}=(n-\lambda_d,\ldots ,n-\lambda_1)\vdash (n-t)d.
$$
Evidently $\lambda^{*,n}$ is $(n-t)$-admissible. Note that
$\lambda$ and $\lambda^{*,n}$ rotated by $180^\circ$ degrees
complement each other to a $d\times n$ rectangle (representing
the $d$-th tensor power of the determinant $\det V= \Bw^n V$
when $n=\dim V$).

Notice that $\lambda^{*,n}$ is a trivial extension of
$\lambda^{*,\lambda_1}$. In view of this we will denote
$\lambda^{*,\lambda_1}$ just with $\lambda^{*}$, calling it
simply the \emph{dual} of $\lambda$. Also, note that if $k=d$,
so that $\lambda$ is a trivial extension of some $\gamma$, then
$\lambda^{*,n}=\gamma^{*,n}$. Therefore, when speaking of dual
partitions, we will usually assume that $n=\lambda_1$
and $k<d$.

\begin{prop}\label{dualization}
Let $\mu$ be a partition of $d$ and consider a $t$-admissible
partition $\lambda =(\lambda_1,\dots,\lambda_k)\vdash td$.
Suppose $\dim V=\lambda_1$. Then
$$
\mult_\lambda \Bigl(L_\mu\Bw^t V\Bigr) =  \mult_{\lambda^*}\bigl(L_\mu(\Bw^{\lambda_1-t} V\bigr).
$$
In particular, $\lambda$ is of single $\Bw^t$-type if an only
if $\lambda^*$ is of single $\Bw^{\lambda_1-t}$-type.
\end{prop}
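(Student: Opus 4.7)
\emph{Proof plan.} The plan is to exploit the natural $\GL(V)$-isomorphism $\Bw^{n-t}V \cong (\Bw^t V)^*\tensor \det V$ (with $n=\dim V=\lambda_1$), arising from the perfect pairing $\Bw^t V\tensor \Bw^{n-t}V\to \det V$, together with the fact that the Schur functor $L_\mu$ commutes with duals and with twists by one-dimensional representations. Applying $L_\mu$ to this isomorphism and using the standard identities $L_\mu(W^*)\cong (L_\mu W)^*$ and $L_\mu(W\tensor U)\cong L_\mu(W)\tensor U^{\tensor d}$ for one-dimensional $U$ (where $d=|\mu|$, and the second identity follows from $T^d(W\tensor U)\cong T^d(W)\tensor U^{\tensor d}$ with a trivial $S_d$-action on the second factor), one obtains
$$L_\mu(\Bw^{n-t}V) \cong \bigl(L_\mu(\Bw^t V)\bigr)^*\tensor (\det V)^{\tensor d}.$$

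Next I would decompose $L_\mu(\Bw^t V) = \bigoplus_\nu (L_\nu V)^{\oplus m_\nu}$ into $\GL(V)$-irreducibles. The heart of the argument is then the isomorphism
$$(L_\nu V)^*\tensor (\det V)^{\tensor d} \cong L_{\nu^{*,n}} V$$
for every partition $\nu$ appearing in the sum (so in particular $\nu_1\le n$). Once this is established, applying it summand-by-summand gives $L_\mu(\Bw^{n-t}V)\cong\bigoplus_\nu (L_{\nu^{*,n}}V)^{\oplus m_\nu}$; since $\nu\mapsto\nu^{*,n}$ is an involution on the relevant partitions (by the rectangle-complement interpretation noted in the paper), reading off the coefficient of $L_{\lambda^*}V$ on the two sides of this decomposition produces the required multiplicity equality. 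The ``single $\Bw^t$-type'' statement then follows at once, because the identity holds uniformly for all $\mu\vdash d$.

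To verify the key isomorphism I would compute highest weights directly in the Weyman convention used in the paper: $L_\nu V$ has highest weight $\tl\nu$, viewed as an $n$-tuple by padding with trailing zeros, and hence $(L_\nu V)^*\tensor (\det V)^{\tensor d}$ has highest weight whose $i$-th coordinate is $d-(\tl\nu)_{n+1-i}$. A short combinatorial count starting from the definition $\nu^{*,n}=(n-\nu_d,\dots,n-\nu_1)$ shows that
$$(\tl{\nu^{*,n}})_i = \#\{l\in\{1,\dots,d\}:\nu_l\le n-i\} = d-(\tl\nu)_{n+1-i},$$
so the two highest weights coincide. The $t$-admissibility assumption $k\le d$ is precisely what guarantees that these coordinates are all nonnegative, so that $L_{\nu^{*,n}}V$ is indeed a polynomial (partition) representation of $\GL(V)$.

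The main obstacle is the careful bookkeeping of conventions: one has to keep straight Weyman's convention (in which $\Bw^m V$ corresponds to the single-row partition $(m)$) versus the standard one in which $L_\lambda V$ has highest weight $\lambda$, and to notice that shifting by $(\det V)^{\tensor d}$---rather than the more customary $(\det V)^{\tensor \lambda_1}$---is precisely what produces the paper's ``$d\times n$-rectangle'' dual $\lambda^{*,n}$ rather than some other rectangular complement. Matching these two choices correctly is the technical crux of the proposition.
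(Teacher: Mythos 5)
Your proposal is correct and follows essentially the same route as the paper: both arguments rest on the equivariant isomorphism $\Bw^{n-t}V\iso(\Bw^tV)^*\tensor\det V$ coming from the perfect pairing into $\det V$, apply the Schur functor (the paper phrases this as applying the Young symmetrizer to the $d$-th tensor power), and then observe that dualizing and twisting by $(\det V)^{\tensor d}$ sends each constituent $L_\nu V$ to $L_{\nu^{*,n}}V$ with the same multiplicity. Your explicit highest-weight verification of $(L_\nu V)^*\tensor(\det V)^{\tensor d}\iso L_{\nu^{*,n}}V$ in Weyman's convention just spells out the weight bookkeeping the paper leaves implicit.
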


\begin{proof}
Set $n=\dim V=\lambda_1$. Consider the $\GL(V)$-equivariant
multiplication
$$
\Bw^t V\tensor\Bw^{n-t}V\to \det V.
$$
It induces an equivariant isomorphism
$$
\Bw^{t} V\iso \Hom_K\Bigl(\Bw^{n-t}V,\det V\Bigr)=\Bigl(\Bw^{n-t}V\Bigr)^*\tensor
\det V=\Bigl(\Bw^{n-t}V^*\Bigr)\tensor\det V.
$$
Next we can pass to the $d$-th tensor power on the right and
the left, and apply the Young symmetrizer $\YY_\mu$ (see Fulton
and Harris \cite[p. 46]{FH} inverting rows and columns) to
obtain a $\GL(V)$-equivariant isomorphism
$$
\YY_\mu \Tensor^d\Bw^tV\iso \YY_\mu \Tensor^d\Bigl(\Bw^{n-t} V^*\tensor \det V\Bigr).
$$
Next we can go from $\YY_\mu \Tensor^d(\Bw^{n-t} V^*\tensor
\det V)$ to $\YY_\mu \Tensor^d\Bw^{n-t} V^*$, except that we
have to subtract the weight of $\Tensor^d \det V$ from each
weight in $\YY_\mu \Tensor^d(\Bw^{n-t} V^*\tensor \det V)$.
Finally, if we replace $\GL(V)$ by $\GL(V^*)$ as the acting
group, we see that every partition $\lambda$ in $\YY_\mu
\Tensor^d \Bw^t V$ goes with equal multiplicity to the
partition $\lambda^*$ in $\YY_\mu \Tensor^d\Bw^{n-t} V^*$. But
the multiplicities depend only on the dimension of the basic
vector space, and therefore we can replace $\Bw^{n-t} V^*$ by
$\Bw^{n-t} V$.
\end{proof}

Below we will use the obvious generalization of Proposition
\ref{dualization} to $\lambda^{*,n}$ that results from
Proposition \ref{propretract}.

\section{Partitions of single exterior type}

The characterization of partitions of single exterior type is
based on a recursive criterion established in \cite{BCV}. For
it and also for the characterization of the minimal relations
of single exterior type we need the same terminology.

Let $\lambda$ be a $(t,d)$-admissible diagram.  Given $1\leq
e\leq d$, we say that $\alpha$ is a \emph{$(t,e)$-predecessor}
of $\lambda$ if and only if $\alpha$ is a $(t,d-e)$-admissible
diagram such that $\tl\alpha_i\leq \tl\lambda_i\leq
\tl\alpha_i+e$ for all $i=1,\ldots ,\lambda_1$ (we set
$\tl\alpha_i=0$ if $i>\alpha_1$). In such a case we also say
that $\lambda$ is a \emph{$(t,e)$-successor} of $\alpha$. If we
just say that $\alpha$ is a \emph{$t$-predecessor} of
$\lambda$, we mean that $\alpha$ is a $(t,e)$-predecessor of
$\lambda$ for some $e$, and analogously for $\lambda$ being a
\emph{$t$-successor} of $\alpha$. (This terminology deviates
slightly from \cite{BCV} where a predecessor is necessarily a
$(t,1)$-predecessor.) The Littlewood-Richardson rule implies at
once that, for a $(t,d)$-admissible diagram $\lambda$ and a
$(t,d-e)$-admissible diagram $\alpha$ the following are
equivalent:
\begin{itemize}
\item[(i)] $\alpha$ is a $(t,e)$-predecessor of $\lambda$.
\item[(ii)] $L_{\lambda}V$ occurs in $(\Tensor^e \Bw^{t}V)
    \tensor L_{\alpha}V$, where $V$ is a
$K$-vector space of dimension $\geq \lambda_1$.
\end{itemize}

Now we can quote the following criterion for single
$\Bw^t$-type from \cite[Proposition 1.22]{BCV}. (Condition (iv)
has been added here. It strengthens (iii), but follows from
(iii) by induction.)

\begin{prop}\label{single_char}
Let $\lambda\vdash dt$ and $\mu\vdash d$ be partitions such
that $L_\lambda V$ occurs in $L_\mu (\Bw^tV)$. Then the
following are equivalent:
\begin{enumerate}
\item[(i)] $\lambda$ is of single $\Bw^t$-type;
\item[(ii)] the multiplicities of $\lambda$ and of $\mu$ in
    $\Tensor^d(\Bw^t V)$ coincide;
\item[(iii)] every $(t,1)$-predecessor $\lambda'$ of
    $\lambda$ is of single $\Bw^t$-type $\mu'$ where $\mu'$
    is a $(1,1)$-predecessor of $\mu$, and no two distinct
    $(t,1)$-predecessors of $\lambda$ share the same
    $(1,1)$-predecessor $\mu'$ of $\mu$;
\item[(iv)] every $t$-predecessor $\lambda'$ of $\lambda$
    is of single $\Bw^t$-type $\mu'$ where $\mu'$ is a
    $1$-predecessor of $\mu$, and no two distinct
    $t$-predecessors of $\lambda$ share the same
    $1$-predecessor $\mu'$ of $\mu$.
\end{enumerate}
\end{prop}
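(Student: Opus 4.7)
The equivalence (i) $\Leftrightarrow$ (ii) $\Leftrightarrow$ (iii) is precisely [BCV, Proposition~1.22], so only (iii) $\Leftrightarrow$ (iv) needs to be addressed. Since every $(t,1)$-predecessor is a $t$-predecessor and every $(1,1)$-predecessor is a $1$-predecessor, (iv) $\Rightarrow$ (iii) is immediate. The interesting direction (iii) $\Rightarrow$ (iv) I would prove by induction on the level $e$ of the $t$-predecessor, with the base case $e=1$ being (iii) itself.

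For the inductive step, any $(t,e)$-predecessor $\lambda'$ of $\lambda$ can, by the Littlewood-Richardson characterization of $t$-predecessors recalled just before the proposition applied to $\Tensor^e \Bw^t V = \Bw^t V \otimes \Tensor^{e-1} \Bw^t V$, be written as a $(t,e-1)$-predecessor of some $(t,1)$-predecessor $\lambda''$ of $\lambda$. Condition (iii) applied to $\lambda$ gives that $\lambda''$ is of single $\Bw^t$-type $\mu''$, a $(1,1)$-predecessor of $\mu$. By (i) $\Leftrightarrow$ (iii) applied to $\lambda''$, the latter itself satisfies (iii) with respect to $\mu''$, and the inductive hypothesis produces a single $\Bw^t$-type $\mu'$ for $\lambda'$, a $(1,e-1)$-predecessor of $\mu''$ and hence a $(1,e)$-predecessor of $\mu$. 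This settles the existence half of (iv). For the distinctness, assume $\lambda'_1 \neq \lambda'_2$ are $(t,e)$-predecessors of $\lambda$ sharing the single type $\mu'$; choose intermediate $(t,e-1)$-predecessors $\tilde\lambda_i$ with $\lambda'_i$ a $(t,1)$-predecessor of $\tilde\lambda_i$. If $\tilde\lambda_1 = \tilde\lambda_2$, (iii) applied to this common value contradicts the shared type; otherwise the inductive injectivity gives distinct types $\tilde\mu_1 \neq \tilde\mu_2$, and one would argue via the chain-lifting bijection implicit in iterated application of (iii), combined with the multiplicity identity from (ii) expanded through $\Tensor^d \Bw^t V = \Tensor^e \Bw^t V \otimes \Tensor^{d-e}\Bw^t V$ and the fact that each $\tilde\lambda_i$ is of single type (so (ii) applies to $\tilde\lambda_i$).

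The main obstacle I anticipate is exactly this final distinctness step in the case $\tilde\lambda_1 \neq \tilde\lambda_2$. Iterating (iii) yields a bijection between Young-lattice chains of length $e$ descending from $\mu$ and chains of $(t,1)$-predecessors descending from $\lambda$, yet the resulting chain-count identity
\[
\sum_{\alpha} \mult_\lambda(\Tensor^e \Bw^t V \otimes L_\alpha V)\cdot \mult_\alpha \Tensor^{d-e}\Bw^t V = \mult_\mu \Tensor^d \Bw^t V,
\]
taken alone, is consistent with the map $\lambda' \mapsto \mu'$ being non-injective on $(t,e)$-predecessors. Closing the gap will require a finer combinatorial argument, presumably using the vertical-strip Pieri rule underlying $(t,1)$-predecessors, to show that two lifted chains ending at the same vertex in the Young-lattice descent from $\mu$ must also coincide at their endpoint in the corresponding descent from $\lambda$.
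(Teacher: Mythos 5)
Your route coincides with the paper's: the authors simply quote \cite[Proposition 1.22]{BCV} for the equivalence of (i), (ii) and (iii), note that (iv) trivially implies (iii), and assert that (iii) implies (iv) ``by induction'' without writing the induction out. Your reduction of the problem to (iii) $\Rightarrow$ (iv), and the existence half of your induction on $e$ --- factoring a $(t,e)$-predecessor $\lambda'$ through a $(t,1)$-predecessor $\lambda''$ of $\lambda$, observing via (iii) that $\lambda''$ is of single type and hence itself satisfies (iii), and then applying the inductive hypothesis to $\lambda''$ --- is correct and is surely what the authors had in mind.

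The distinctness half, however, is a genuine gap, and your own diagnosis of it is accurate: the counting you set up cannot close it. Writing $m(\lambda',\lambda)=\mult_\lambda\bigl(L_{\lambda'}V\tensor\Tensor^e\Bw^tV\bigr)$ for the number of length-$e$ chains of $(t,1)$-steps from $\lambda'$ to $\lambda$, and $f^{\mu/\mu'}$ for the number of standard tableaux of the skew shape $\mu/\mu'$, the existence half together with the injectivity in (iii) (compare two chains at the largest index where they differ) yields an injection of $t$-chains into $1$-chains and hence $\sum_{\lambda'}m(\lambda',\lambda)\le f^{\mu/\mu'}$, the sum being over the $(t,e)$-predecessors of type $\mu'$; condition (ii) then forces equality. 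But the containment $L_{\lambda'}V\subseteq L_{\mu'}(\Bw^tV)$ with multiplicity one also gives the \emph{individual} upper bound $m(\lambda',\lambda)\le f^{\mu/\mu'}$ for each such $\lambda'$, so all the numerical identities you can extract this way are perfectly consistent with two distinct predecessors of the same type each carrying a proper fraction of the $f^{\mu/\mu'}$ chains. To finish one needs a genuinely finer input, for instance: (a) show that $\mult_\lambda\bigl(L_{\lambda'}V\tensor L_\alpha(\Bw^tV)\bigr)=c^{\mu}_{\mu'\alpha}$ for every $\alpha\vdash e$, i.e.\ each predecessor of type $\mu'$ already carries \emph{all} $f^{\mu/\mu'}$ chains, which combined with the equality of the sum forces uniqueness; or (b) show that two $(t,e)$-predecessors of the same type must admit a common $(t,e-1)$-successor among the $(t,1)$-predecessors of $\lambda$, which reduces distinctness to the case $e-1$ via the induction hypothesis applied to that common intermediate. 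Neither step is carried out in your proposal, and the paper does not supply it either; as written, your argument establishes the proposition only modulo exactly the point that the parenthetical remark in the paper leaves to the reader.
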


As we will see in a moment, one class of single $\Bw^t$-type
partitions is given by the hooks.

\begin{definition}
A diagram $\lambda=(\lambda_1,\ldots ,\lambda_k)$ with
$\lambda_2\leq 1$ is called a \emph{hook}.
\end{definition}

A hook can be always written like $(a,1^b)$, where $1^b$ means
$b$ ones.

\begin{lemma}\label{hooklemma}
Let $d>0$ and $k\in\{0,\ldots ,d-1\}$. Then $(td-k,1^k)$ is of
single $\Bw^t$-type $\mu$ where:
\begin{itemize}
\item[{(i)}] $\mu=(d-k,1^k)$ if $t$ is odd.
\item[{(ii)}] $\mu=(k+1,1^{d-k-1})$ if $t$ is even.
\end{itemize}
\end{lemma}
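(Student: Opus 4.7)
I would proceed by induction on $d$. The base case $d=1$ reduces to $k=0$, $\lambda=(t)$, $\mu=(1)$, and is immediate: $L_{(1)}(\Bw^tV)=\Bw^tV=L_{(t)}V$ is irreducible.

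For the inductive step I first enumerate the $(t,1)$-predecessors of $\lambda=(td-k,1^k)$. Such a predecessor corresponds (using the paper's transposed Pieri rule) to removing a horizontal strip of size $t$ from $\lambda$ while keeping at most $d-1$ rows. A short case analysis yields at most two hooks,
\[
\alpha^{(1)}=(t(d-1)-k,\,1^k)\ \ (\text{if }k\le d-2),\qquad \alpha^{(2)}=(t(d-1)-k+1,\,1^{k-1})\ \ (\text{if }k\ge 1),
\]
corresponding to deleting the last $t$ cells of row~$1$ or the bottom cell of column~$1$ together with the last $t-1$ cells of row~$1$. By the induction hypothesis, each $\alpha^{(i)}$ is of single $\Bw^t$-type $\mu^{(i)}$, and a parity-dependent check identifies $\mu^{(1)}$ and $\mu^{(2)}$ as the two distinct $(1,1)$-predecessors of the claimed $\mu$.

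Once the premise $L_\lambda V\subset L_\mu(\Bw^tV)$ is established, Proposition~\ref{single_char}(iii) applies and concludes that $\lambda$ is of single $\Bw^t$-type $\mu$. To secure the premise I would combine a multiplicity count with the Pieri structure: counting semistandard Young tableaux of shape $\lambda$ with content $(t^d)$ (the minimality of the $(1,1)$-entry forces it to equal $1$) gives that the multiplicity of $L_\lambda V$ in $\Tensor^d\Bw^tV$ equals $\binom{d-1}{k}=f^\mu$; next, applying Pieri to $L_{\alpha^{(i)}}V\subset L_{\mu^{(i)}}(\Bw^tV)$ and invoking the single type of $\alpha^{(i)}$ yields that the total multiplicity of $L_\lambda V$ in $\bigoplus_{\nu\in\Op(\mu^{(i)})}L_\nu(\Bw^tV)$ is $1$; finally, since $\Op(\mu^{(1)})\cap\Op(\mu^{(2)})=\{\mu\}$, the tight matching with the total multiplicity $f^\mu$ forces the multiplicity of $L_\lambda V$ in $L_\mu(\Bw^tV)$ to equal $1$.

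I expect the main obstacle to be ruling out the scenario in which the two Pieri copies of $L_\lambda V$ (from $\alpha^{(1)}$ and $\alpha^{(2)}$) sit in distinct summands $L_{\nu_1}(\Bw^tV)\ne L_{\nu_2}(\Bw^tV)$ with $f^{\nu_1}+f^{\nu_2}=f^\mu$, which is possible a priori; this is where the tight multiplicity identity is essential. The boundary cases $k=0$ and $k=d-1$ (only one predecessor, so the common-successor argument degenerates) require separate input: for $k=0$ the explicit $\GL(V)$-equivariant wedging maps $\Bw^d(\Bw^tV)\to\Bw^{td}V$ (nonzero when $t$ is odd) and $\Sym^d(\Bw^tV)\to\Bw^{td}V$ (nonzero when $t$ is even) give $L_{(td)}V\subset L_\mu(\Bw^tV)$ for the respective $\mu$; the case $k=d-1$ reduces to the $k=0$ case at $(d,t-1)$ via Proposition~\ref{propretract}, since $(td-d+1,1^{d-1})$ is precisely the trivial extension of $(d(t-1))$.
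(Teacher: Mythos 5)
Your overall strategy coincides with the paper's: induction on $d$, the two hook $(t,1)$-predecessors, Proposition~\ref{single_char}, and the multiplicity count $\binom{d-1}{k}$. The treatment of the boundary cases is fine (your reduction of $k=d-1$ to $k=0$ at $t-1$ via Proposition~\ref{propretract} is a clean variant of what the paper does). The problem is the step you yourself flag as the ``main obstacle'': your claim that the tight identity $\operatorname{mult}_\lambda(\Tensor^d\Bw^tV)=f^\mu$ rules out the scenario in which $L_\lambda V$ sits in two summands $L_{\nu_1}(\Bw^tV)\oplus L_{\nu_2}(\Bw^tV)$ with $\nu_1\in\Op(\mu^{(1)})\setminus\Op(\mu^{(2)})$ and $\nu_2\in\Op(\mu^{(2)})\setminus\Op(\mu^{(1)})$. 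It cannot: in that scenario $\nu_i$ has $\mu^{(i)}$ as its \emph{only} $(1,1)$-predecessor, so the branching rule gives $f^{\nu_i}=f^{\mu^{(i)}}$ and hence $f^{\nu_1}+f^{\nu_2}=f^{\mu^{(1)}}+f^{\mu^{(2)}}=f^\mu$ automatically. Your three constraints (total multiplicity $f^\mu$, and multiplicity $1$ in each $L_{\mu^{(i)}}(\Bw^tV)\tensor\Bw^tV$) are therefore always satisfied by the bad solution whenever such $\nu_1,\nu_2$ exist.

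They do exist in low degree. For $d=3$, $k=1$ (so $\mu=(2,1)$, $\mu^{(1)}=(1,1)$, $\mu^{(2)}=(2)$) the assignment $m_{(1^3)}=m_{(3)}=1$, $m_{(2,1)}=0$ passes all your constraints, since $f^{(1^3)}+f^{(3)}=2=f^{(2,1)}$; for $d=4$, $k=1$ the assignment $m_{(2,2)}=m_{(4)}=1$, $m_{(3,1)}=0$ passes them as well ($f^{(2,2)}+f^{(4)}=3=f^{(3,1)}$), and dually for $d=4$, $k=2$. So your induction breaks already at $d=3$ and does not restart at $d=4$. This is precisely why the paper treats $d=3$ separately by importing the explicit computation of \cite[Proposition 1.18]{BCV} (that $L_{(3t-1,1)}V$ actually occurs in $L_{(2,1)}(\Bw^tV)$) before running the counting argument, which it only invokes for $d>3$; some extra input of this kind --- e.g.\ locating $L_\lambda V$ inside finer tensor products such as $\Bw^2(\Bw^tV)\tensor\Bw^2(\Bw^tV)$, whose $\GL(V)$-decomposition is known in degree $2$ --- is needed to exclude the competing solution in these degrees. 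Once the critical degrees are settled, your argument for $d\ge 5$ (where the only admissible $\nu$ is $\mu$ itself) does close.
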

\begin{proof}
Let us fix $t$ and use induction on $d$. For $d=2$ the
statement is very easy to prove. For $d=3$ \cite[Proposition
1.18]{BCV} implies that $L_{(3t-1,1)}V$ occurs in
$L_{(2,1)}(\Bw^tV)$, so we are done in this case by Proposition
\ref{single_char} (ii). Therefore assume $d>3$.

If $t$ is odd, then $L_{(dt)}V$ occurs in $L_{(d)}(\Bw^tV)$: In
fact, $L_{(dt)}V$ has multiplicity $1$ in $\Tensor \Bw^tV$, so
it can occur in $L_{\mu}(\Bw^tV)$ only if $\mu=(d)$ (the $d$-th
exterior power) or $\mu=(1^d)$ (the $d$-th symmetric power).
Furthermore $(2t)$ is a $t$-predecessor of $(dt)$, and
$\Bw^{2t}V$ occurs in $\Bw^2(\Bw^tV)$ (for instance see
\cite[Lemma 2.1]{BCV}). Therefore $L_{(dt)}V$ occurs in
$\Bw^2(\Bw^tV)\tensor(\Tensor^{d-2}\Bw^tV)$. In particular, it
cannot occur in $L_{(1^d)}(\Bw^tV)$. In the same way, one sees
that $L_{(dt)}V$ occurs in $L_{(1^d)}(\Bw^tV)$ whenever $t$ is
even.

From now on let us assume $t$ odd; the even case is similar. If
$0<k<d-1$, then $(dt-k,1^k)$ has two $(t,1)$-predecessors,
namely
$$
((d-1)t-k,1^k)\qquad\text{and}\qquad((d-1)t-k+1,1^{k-1}).
$$
By induction, the respective Schur modules occur in
$$
L_{(d-k-1,1^k)}\Bigl(\Bw^tV\Bigr)\qquad\text{and}\qquad
L_{(d-k,1^{k-1})}\Bigl(\Bw^tV\Bigr).
$$
So, the Schur modules corresponding to the $(t,1)$-successors
of $((d-1)t-k,1^k)$ can occur in $L_{(d-k,1^k)}(\Bw^tV)$ or in
$L_{(d-k+1,1^{k-1})}(\Bw^tV)$, and the ones corresponding to
the $(t,1)$-successors of $((d-1)t-k+1,1^{k-1})$ can occur in
$L_{(d-k+1,1^{k-1})}(\Bw^tV)$ or in $L_{(d-k,1^k)}(\Bw^tV)$. By
counting multiplicities and using $d>3$, one can check that the
only possibility is that $L_{(dt-k,1^k)}V$ occurs in
$L_{(d-k,1^k)}(\Bw^tV)$. Notice that the multiplicity of
$L_{(dt-k,1^k)}V$ is the same of the one of
$L_{(d-k,1^k)}(\Bw^tV)$ in $\Tensor \Bw^tV$, i.e.
$\binom{d-1}{k}$, so Proposition \ref{single_char} (ii) lets us
conclude.
\end{proof}

We  must pay particular attention to the duals of hooks: The
dual of the hook $(dt-k,1^k)\vdash dt$ is the diagram
$((dt-k)^{d-k-1},(dt-k-1)^k)\vdash d(dt-k-t)$. Notice that is
the unique partition of $d(dt-k-1)$ with $\lambda_d=0$ and
$\lambda_{d-1}\geq \lambda_1-1$.

Before stating the main theorem it is useful to remark the
following:

\begin{lemma}\label{cubics}
A diagram $(a,b,c)\vdash 3t$ (where $c=0$ is not excluded) is
of single $\Bw^t$-type if and only if
$$
\min\{a-b,b-c\}\leq 1.
$$
\end{lemma}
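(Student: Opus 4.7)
The plan is to combine the four tools developed so far: the trivial-extension reduction (Proposition \ref{propretract}), the dualization (Proposition \ref{dualization}), the hook classification (Lemma \ref{hooklemma}), and the recursive criterion for single $\Bw^t$-type (Proposition \ref{single_char}).

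First I will use Proposition \ref{propretract} iteratively to reduce to $c=0$: the retract $(a-1,b-1,c-1) \vdash 3(t-1)$ has the same value of $\min\{a-b, b-c\}$ and the same single-type status as $(a,b,c)$, so it suffices to handle $\lambda=(a,b,0)$. A direct Pieri-style enumeration of chains $\emptyset \to \alpha_1 \to \alpha_2 \to \lambda$ forces $\alpha_1 = (t)$ and $\alpha_2 = \tau_k = (t+k, t-k)$ for $k$ in the consecutive range $[|t-b|, \min(a-t, t)]$; using $a + b = 3t$, this range has length $\min\{a-b, b\} + 1$, which is therefore the multiplicity of $L_\lambda V$ in $\Tensor^3 \Bw^t V$ and also the number of $(t,1)$-predecessors of $\lambda$.

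For the \emph{sufficient} direction ($\min\{a-b,b\} \leq 1$) I will reduce $\lambda$ to a hook. If $b \leq 1$ then $\lambda$ itself is a hook and Lemma \ref{hooklemma} applies. If $b \geq 2$ and $a - b \leq 1$, then the dual $\lambda^* = (a, a-b)$ computed via Proposition \ref{dualization} is a hook (either $(a)$ or $(a,1)$), so Lemma \ref{hooklemma} applied to $\lambda^*$ combined with Proposition \ref{dualization} gives single $\Bw^t$-type for $\lambda$.

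For the \emph{necessary} direction ($\min\{a-b,b\} \geq 2$) I will invoke Proposition \ref{single_char}(iii). Each $\tau_k$ is itself of single $\Bw^t$-type: iterating Proposition \ref{propretract} identifies $\tau_k = (t+k, t-k)$ at $t$ with the 1-row hook $(2k)$ at $t'=k$, and Lemma \ref{hooklemma} then gives its type as $(2)$ if $k$ is odd and $(1,1)$ if $k$ is even. Since $\min \geq 2$ gives at least three consecutive values of $k$ in the range, two of them share parity and yield two distinct $(t,1)$-predecessors of $\lambda$ sharing the same $(1,1)$-predecessor of any candidate $\mu \vdash 3$; condition (iii) then rules out single $\Bw^t$-type for every $\mu$ with $L_\lambda V \subset L_\mu(\Bw^t V)$. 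The main subtlety I expect is ensuring the parity-to-type correspondence is established cleanly, which the retract-to-hook trick accomplishes using only Lemma \ref{hooklemma} without invoking external plethysm identities.
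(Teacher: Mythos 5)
Your argument is correct and follows essentially the same route the paper sketches: reduce to $c=0$ via Proposition \ref{propretract}, identify the $(t,1)$-predecessors as a consecutive run of the $\tau_k=(t+k,t-k)$ of length $\min\{a-b,b\}+1$, and apply Proposition \ref{single_char}(iii) together with the parity rule for which $\mu'\vdash 2$ each $\tau_k$ belongs to, with Proposition \ref{dualization} handling the $a-b\le 1$ cases. Your only wrinkle is deriving the parity-to-type correspondence by retracting $\tau_k$ to a hook via Lemma \ref{hooklemma} rather than quoting the known decomposition of $\Sym^2$ and $\Bw^2$ of $\Bw^t V$ (note that for $k=0$ the retraction should stop at the hook $(1,1)$ rather than the empty partition, which gives the same answer); this is a cosmetic difference, not a different method.
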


Since all partitions $\lambda\vdash 2t$ are of single
$\Bw^t$-type, one must find exactly those partitions
$(a,b,c)\vdash 3t$ that have no two predecessors in the second
symmetric or second exterior power. Since the latter are easily
characterized (for example, see \cite[Lemma 2.1]{BCV}), the
proof of Lemma \ref{cubics} is an easy exercise. Because of
Proposition \ref{propretract} one may assume $c=0$, and
Proposition \ref{dualization} helps to further reduce the
number of cases.

For the proof of the next theorem we will abbreviate ``single
$\Bw^t$-type" by ``ST" and ``not of single $\Bw^t$-type" by
``NST".

\begin{thm}\label{mainthm}
A $t$-admissible diagram $\lambda=(\lambda_1,\ldots
,\lambda_k)\vdash dt$ is of single $\Bw^t$-type $\mu\vdash d$
if and only if it satisfies one (or more) of the following:
\begin{itemize}
\item[(i)] $\lambda_d\geq t-1$, in which case $\mu =
    (\lambda_1-t+1,\ldots ,\lambda_d-t+1)$.
\item[(ii)] $\lambda_1\leq t+1$, in which case $\mu =
    \lambda^{*,t+1}$.
\item[(iii)] $\lambda_d\geq \lambda_2-1$. If
    $\lambda=(t^d)$, then $\mu=(1^d)$. Otherwise put
    $k=\max\{i:\lambda_i>\lambda_d\}$: according with
    $t-\lambda_d$ being odd or even, $\mu = (d-k, \ 1^{k})$
    or $\mu = (k+1, \ 1^{d-k-1})$.
\item[(iv)] $\lambda_{d-1}\geq \lambda_1-1$. If
    $\lambda=(t^d)$, then $\mu=(1^d)$. Otherwise put
    $k=\min\{i:\lambda_i<\lambda_1\}$: according with
    $\lambda_1-t$ being odd or even, $\mu = (k, \ 1^{d-k})$
    or $\mu = (d-k+1, \ 1^{k-1})$.
\end{itemize}
\end{thm}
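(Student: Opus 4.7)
The theorem naturally splits into sufficiency (each of (i)--(iv) implies $\lambda$ is of single $\Bw^t$-type with the stated $\mu$) and necessity (single type forces one of (i)--(iv)). Since Proposition \ref{dualization} interchanges (i) with (ii) and (iii) with (iv), it suffices in each direction to treat only (i) and (iii). For sufficiency of (i), the hypothesis $\lambda_d\ge t-1$ says $\lambda$ is the trivial extension by $t-1$ columns of length $d$ of $\lambda'=(\lambda_1-t+1,\ldots,\lambda_d-t+1)\vdash d$; iterating Proposition \ref{propretract} reduces to the $t=1$ case where $\Bw^1V=V$ makes $\mu=\lambda'$ automatic. For sufficiency of (iii), the case $\lambda=(t^d)$ is direct from Cauchy's rule, and otherwise the shape forces $\lambda=(\lambda_1,(\lambda_d+1)^{k-1},\lambda_d^{d-k})$; retracting $\lambda_d$ columns of length $d$ via Proposition \ref{propretract} reduces to the hook $(\lambda_1-\lambda_d,1^{k-1})\vdash d(t-\lambda_d)$, and Lemma \ref{hooklemma} then delivers the stated $\mu$.

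For necessity I would induct on $d$. The base cases are immediate: for $d=2$ every admissible partition is of single type and (iii) holds since $\lambda_2=\lambda_d$; for $d=3$, Lemma \ref{cubics} characterizes single type by $\min\{\lambda_1-\lambda_2,\lambda_2-\lambda_3\}\le 1$, which is exactly (iv) or (iii). For $d\ge 4$, suppose $\lambda$ is of single $\Bw^t$-type $\mu$ but fails all four conditions, yielding
$$
\lambda_d\le t-2,\qquad \lambda_1\ge t+2,\qquad \lambda_2-\lambda_d\ge 2,\qquad \lambda_1-\lambda_{d-1}\ge 2.
$$
The target is a contradiction with Proposition \ref{single_char}(iv): by induction, every $t$-predecessor of $\lambda$ is of single $\Bw^t$-type with $\mu'$ a $1$-predecessor of $\mu$, and distinct predecessors must give distinct $\mu'$. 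The four inequalities provide enough slack in the shape of $\lambda$ to construct two distinct $(t,e)$-predecessors sharing the same $\mu'$.

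The main obstacle is this last combinatorial step. My plan is to first reduce to a normal form via Propositions \ref{propretract} and \ref{dualization} (for instance, assuming $\lambda_d=0$, so that $\lambda$ is not a trivial extension), which drastically cuts down the case analysis. The failure of (i)--(iv) then places $\lambda$ in the generic regime, and two horizontal $t$-strips can typically be removed from different parts of $\lambda$ to produce predecessors that, by the induction hypothesis, both fall under (iii) or (iv); their types $\mu'$ are hooks read off from the surviving row structure, and the slack ensures that two such choices give the \emph{same} hook $\mu'$, contradicting the injectivity in Proposition \ref{single_char}(iv). The subtle part will be matching the parity cases appearing in (iii) and (iv) to the particular strip chosen, and ruling out degenerate situations in which the two candidate predecessors accidentally coincide or land in case (i) or (ii) instead.
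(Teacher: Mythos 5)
Your sufficiency argument (trivial extension for (i), reduction to a hook plus Lemma \ref{hooklemma} for (iii), and duality via Proposition \ref{dualization} for (ii) and (iv)) is exactly the paper's, and your treatment of the base cases $d\le 3$ also matches. The gap is in the necessity step for $d\ge 4$, which is where essentially all the work of the theorem lies and which you have only sketched. Your plan is to invoke the second clause of Proposition \ref{single_char}(iv): produce two distinct $t$-predecessors of $\lambda$ sharing the same $1$-predecessor $\mu'$ of $\mu$. But the justification you offer --- that the two predecessors ``by the induction hypothesis, both fall under (iii) or (iv)'', so that their types are hooks which the slack forces to coincide --- is not what the induction hypothesis gives you. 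The inductive statement only says that a partition of $(d-1)t$ is of single $\bigwedge^t$-type if and only if it satisfies one of (i)--(iv); it does not place the predecessors of your $\lambda$ in classes (iii) or (iv). On the contrary, for a $\lambda$ violating all four conditions, a predecessor obtained by removing a horizontal $t$-strip typically violates all four conditions as well, i.e.\ it is \emph{not} of single type. So in general there are no ``two single-type predecessors of the same hook type'' to be found, and the construction you describe cannot be carried out as stated.

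The paper uses the other, simpler clause of Proposition \ref{single_char}: for $d\ge 4$ it exhibits a \emph{single} $(t,1)$-predecessor that again fails (i)--(iv), hence is NST by induction, which already forces $\lambda$ to be NST; the ``two predecessors sharing the same $\mu'$'' mechanism is only needed at the critical degree $d=3$, inside Lemma \ref{cubics}. Even after redirecting your argument to this goal, the substance of the proof remains the explicit case analysis --- successors of hooks, partitions of the form $(\lambda_1,\lambda_2,1^{k-2})$, the predecessor with lexicographically smallest set of shortened columns, and the three degenerate subcases in which that predecessor could accidentally land in one of the classes (i)--(iv), one of which is resolved by dualizing --- and none of this appears in your proposal. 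As it stands, the necessity direction is not proved.
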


If $\lambda$ is in one of the four classes above, then we know
that it is of single $\Bw^t$-type from what done until now: (i)
If $\lambda_d\geq t-1$, then it is a trivial extension of $\mu
= (\lambda_1-t+1,\lambda_2-t+1,\ldots ,\lambda_d-t+1)\vdash d$,
that is obviously of single $\Bw^1$-type; (ii) if
$\lambda_1\leq t+1$, then $\mu = \lambda^{*,t+1}\vdash d$ is of
single $\Bw^1$-type, so Proposition \ref{dualization} let us
conclude; (iii) If $\lambda_d\geq \lambda_2-1$, then $\lambda$
is a trivial extension of a hook. The shape of $\mu$ follows
from Proposition \ref{propretract} and Lemma \ref{hooklemma};
(iv) if $\lambda_{d-1}\geq \lambda_1-1$, then $\lambda^*$ is a
hook. From this, combining Lemma \ref{hooklemma} and
Proposition \ref{dualization}, we get the shape of $\mu$.

As we have just seen, the four classes can described as
follows: (i) consists of the trivial extensions of
$1$-admissible partitions, (ii) is dual to (i) in the sense of
Proposition \ref{dualization}, (iii) contains the hooks and
their trivial extensions, and (iv) is dual to (iii).

The classification in the theorem completely covers the cases
$d=1$ and $d=2$, in which all shapes are of single
$\Bw^t$-type, and also the case $d=3$ done in Lemma
\ref{cubics}. Therefore we may assume that $d\ge 4$. Then the
theorem follows from the next lemma and Proposition
\ref{single_char}. In its proof we will use the theorem
inductively.

\begin{lemma}
If $d\ge 4$ and $\lambda$ is not of one of the types in the
theorem, then it has an NST $(t,1)$-predecessor.
\end{lemma}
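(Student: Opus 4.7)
The plan is to argue by induction on $d$, taking the already-verified cases $d\le 3$ as the base. Assume $d\ge 4$ and that Theorem~\ref{mainthm} is known for partitions of $(d-1)t$. Let $\lambda=(\lambda_1,\dots,\lambda_k)\vdash dt$ be $(t,d)$-admissible and not of any of the four listed types. With the convention $\lambda_j=0$ for $j>k$, this hypothesis translates into the four strict numerical inequalities
$$
\lambda_1\ge t+2,\qquad \lambda_d\le t-2,\qquad \lambda_2\ge\lambda_d+2,\qquad \lambda_1\ge\lambda_{d-1}+2.
$$
My aim is to produce a $(t,1)$-predecessor $\lambda'$---equivalently, a horizontal strip $\lambda/\lambda'$ of length $t$ with $\lambda'$ having at most $d-1$ parts---such that $\lambda'$ itself fails all four conditions for the parameter $d-1$; the inductive statement of Theorem~\ref{mainthm} then forces $\lambda'$ to be NST, as required.

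The guiding heuristic is that the four displayed inequalities leave enough numerical slack to place a horizontal strip in the ``interior'' of $\lambda$, essentially leaving the extremes of the shape untouched. Preserving $\lambda'_1=\lambda_1$ forces the failure of the new (ii), while keeping $\lambda'_{d-1}$ and $\lambda'_{d-2}$ no larger than $\lambda_{d-1}$ and $\lambda_{d-2}$ helps toward the new (i), (iii), (iv). When $k\le d-1$ the row-count constraint on $\lambda'$ is automatic and the strip may be placed freely; when $k=d$ the strip must include the bottom box of the first column so that $\lambda'$ has only $d-1$ rows, but the remaining $t-1$ decrements still admit enough flexibility.

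The argument then splits into cases according to the column profile of $\lambda$, and Proposition~\ref{dualization} cuts the casework roughly in half by interchanging the two pairs of symmetric conditions (i)/(ii) and (iii)/(iv). I expect the main obstacle to be the boundary configurations in which the horizontal strips of length $t$ are severely restricted---typically when several adjacent columns of $\lambda$ share the same height, forcing the decrements in block---and where one must verify by direct inspection that the resulting $\lambda'$ really does realize the four strict inequalities for $d-1$. The margins of $2$ in $\lambda_2\ge\lambda_d+2$ and $\lambda_1\ge\lambda_{d-1}+2$, together with $\lambda_1\ge t+2$, are exactly what is needed for this verification to go through.
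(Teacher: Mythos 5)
Your setup is right: you negate the four conditions correctly, you identify the correct overall strategy (exhibit an explicit $(t,1)$-predecessor $\lambda'$ violating all four conditions for $d-1$, then invoke the theorem inductively in degree $d-1$ to conclude $\lambda'$ is NST), and your remarks about using Proposition \ref{dualization} to halve the casework and about the special handling needed when $k=d$ both match the paper. But what you have written is a plan, not a proof. The entire mathematical content of this lemma \emph{is} the case analysis you defer: phrases such as ``I expect the main obstacle to be the boundary configurations'' and ``one must verify by direct inspection that the resulting $\lambda'$ really does realize the four strict inequalities'' concede that the verification has not been carried out. The assertion that ``the margins of $2$ \dots are exactly what is needed'' is the statement to be proved, not an argument for it.

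Concretely, the paper's proof shows that the verification is genuinely delicate and does not reduce to ``placing a strip in the interior.'' When $\lambda$ is a successor of a hook, no interior placement works and one must craft the specific predecessor $((d-1)t-k',2,1^{k'-2})$ with $k'=\max\{2,k-1\}$, checking $\gamma_1\ge t+2$ from $d\ge 4$, $t\ge 2$. In the generic case the paper takes the predecessor obtained by shortening the lexicographically smallest columns, and this candidate can still turn out to be ST: if $\gamma_{d-2}\ge\gamma_1-1$ one is forced to pass to $\lambda^*$, find an NST predecessor there, and dualize back; if $\gamma$ is a trivial extension of a hook or $\gamma_{d-1}\ge t-1$, one needs the separate numerical contradictions $\gamma_{d-1}\le\gamma_2-t$ and $\lambda_{d-1}\ge 2t-1$. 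Likewise, the inequality $\gamma_1\ge t+2$ in the subcase $\gamma_1<\lambda_1$ rests on the non-obvious bound $\lambda_1\ge 2t+2$ extracted from $\lambda_1+(d-2)\lambda_2\ge dt$ and $\lambda_2\le t-1$. None of these steps appears in your proposal, so as it stands the argument has a genuine gap: the existence of an NST predecessor in every case is asserted on heuristic grounds rather than established.
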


The lemma shows that the critical degree is $d=3$ in which the
condition that the predecessors of $\lambda$ occur in pairwise
different predecessors of $\mu$ must be used.

\begin{proof}
If $t=1$ all partitions $\lambda$ fall into the class (i) and
are certainly ST. So we can assume $t\geq 2$.

Suppose first that $\lambda$ is itself a trivial extension.
Then we pass to its trivial reduction $\lambda'$. It is enough
to find an  NST predecessor for $\lambda'$. It yields an NST
predecessor of $\lambda$ after trivial extension. From now on
we can assume that $\lambda$ has at most $d-1$ rows.

Suppose first that $\lambda=(\lambda_1,\dots,\lambda_k)$ is a
successor of a hook. Let $k'=\max\{2,k-1\}$. We choose
$\gamma=((d-1)t-k',2,1^{k'-2})\vdash(d-1)t$. Then $\gamma$ does
not fall into one of the classes (i)--(iv), provided
$\gamma_1\ge t+2$. Using $k'\le d-2$, one derives this
immediately from $d\ge 4$ and $t\geq 2$. The inequality
$\gamma_1\ge t+1$ is sufficient to make $\gamma$ a predecessor
of $\lambda$.

Next suppose $\lambda=(\lambda_1,\lambda_2,1^{k-2})\vdash dt$.
If $\lambda$ has a hook predecessor, then we are done by the
previous case. Therefore we can assume that $\lambda_2\ge t+2$.
If $k=2$, we pass to $\gamma=(\lambda_1,\lambda_2-t)$, and if
$k\ge 3$, we choose
$\gamma=(\lambda_1,\lambda_2-(t-1),1^{k-3})$. Then $\gamma$ is
not of types (i)--(iv). (We are dealing with this case
separately since the duals will come up below.)

In the  remaining case  we choose the predecessor $\gamma$ of
$\lambda$  with the lexicographic smallest set of indices  for
the columns in  which $\gamma$ and $\lambda$ differ by 1. If
$\gamma$ is a hook, then we are done as above. So we can assume
that $\gamma$ is not a hook.

Suppose that $\gamma_1<\lambda_1$. Then $\lambda_2\le t-1$, and
$\gamma$ is not a trivial extension since the bottom row of
$\lambda$ has been removed completely, and $\gamma$ has at most
$d-2$ rows. On the other hand, $\lambda_1+(d-2)\lambda_2\ge dt$
implies $\lambda_1\ge 2t+2$, and so $\gamma_1\ge t+2$. It
follows that $\gamma_{d-2}\le\lambda_{d-2}<\gamma_1-1$, and
$\gamma$ is not of type (i)--(iv).

The case $\gamma_1=\lambda_1>t+1$ remains. We can assume that
$\gamma$ is ST. This is only possible if (1)
$\gamma_{d-2}\ge\gamma_1-1$ or (2) $\gamma$ is the trivial
extension of a hook or (3) $\gamma_{d-1}\ge t-1$.

(1) If $\gamma_{d-2}\ge \gamma_1-1$, then $\lambda_{d-2}\ge
\lambda_1-1$, and $\lambda^*$ is of the second type discussed.
We find an NST predecessor of $\lambda^*$ and dualize back.

(2) If $\gamma$ is a trivial extension of a hook, then
$\gamma_2\le\gamma_{d-1}+1$ and $\lambda_{d-1}\ge t+1$. In
particular $\gamma_2=\lambda_2$, and
$\gamma_{d-1}=\lambda_{d-1}-t\le\lambda_2-t=\gamma_2-t$, which
is a contradiction since $t\geq 2$.

(3) In this case we must have $\lambda_{d-1}\ge 2t-1$ since we
remove $\min\{\lambda_{d-1},t\}$ boxes from row $d-1$ of
$\lambda$. This is evidently impossible (because $t\geq 2$ and
$d\geq 4$).
\end{proof}

\section{Minimal relations of single exterior type}

In this last section we are going to prove the result which
motivated us for producing this note. We will adopt here the
notation given in the introduction.

Let us first recall a result of \cite{BCV}. As already
mentioned, a decomposition of $S_t=\Sym(E\tensor F^*)$ in
irreducible $H$-representations is provided by the Cauchy
formula \eqref{Cauchy}, namely
$$
S_t=\bigoplus_{\mu}L_{\mu}E\tensor L_{\mu}F^*,
$$
where $\mu$ ranges among all the partitions. So, because $G$ is
a subgroup of $H$ whose action is the restriction of that of
$H$, the irreducible $G$-representation $L_{\gamma}V\tensor
L_{\lambda}W^*$ occurs in the $G$-decomposition of $S_t$ if and
only if there exists $\mu\vdash d$ such that $L_{\gamma}V$
occurs in the $\GL(V)$-decomposition of $L_{\mu}(\Bw^tV)$ and
$L_{\lambda}W^*$ occurs in the $\GL(W)$-decomposition of
$L_{\mu}(\Bw^tW^*)$. Moreover, if such a $\mu\vdash d$ exists
and $\gamma$ and $\lambda$ are both of single $\Bw^t$-type,
then $L_{\gamma}V\tensor L_{\lambda}W^*$ is a direct summand of
$J_t\tensor_{S_t}K$ if and only if $\gamma\neq \lambda$ and the
predecessors of $\gamma$ and of $\lambda$ coincide
\cite[Proposition 1.21 and Theorem 1.23(iv)]{BCV}. This is the
fact on which the proof of the next theorem is based.

\begin{thm}\label{singleTyperel}
Let $L_{\gamma}V\tensor L_{\lambda}W^*$ be a direct summand of
$J_t\tensor_{S_t}K$ such that both $\gamma$ and $\lambda$ are
diagrams of single $\Bw^t$-type. Then $(\gamma|\lambda)\in
A\cup B$.
\end{thm}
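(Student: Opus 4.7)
By the criterion of \cite{BCV} cited just above, such a summand corresponds to a pair $\gamma\ne\lambda$, both of single $\Bw^t$-type with a common type $\mu\vdash d$, whose sets of $(t,1)$-predecessors coincide. The plan is to apply the classification of Theorem \ref{mainthm} to each of $\gamma$ and $\lambda$ and to reduce the number of combinations by Proposition \ref{dualization} (which exchanges (i)$\leftrightarrow$(ii) and (iii)$\leftrightarrow$(iv)) together with the swap $\gamma\leftrightarrow\lambda$. Since $J_t$ has no relations in degree $d=1$, only $d\geq 2$ must be treated.

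The cases $d=2,3$ will recover $A$ and $B$ respectively. For $d=2$, every two-row partition $\tau_u=(t+u,t-u)$ is of single $\Bw^t$-type; the decomposition $\Bw^t V\otimes\Bw^t V=\Sym^2\Bw^tV\oplus\Bw^2\Bw^tV$ combined with Weyman's convention gives $\mu(\tau_u)=(1,1)$ for $u$ even and $(2)$ for $u$ odd, and every $\tau_u$ has $(t)$ as its sole $(t,1)$-predecessor. Matching types with $\gamma\ne\lambda$ yields exactly pairs with $u+v$ even, i.e.\ the set $A$. For $d=3$, Lemma \ref{cubics} enumerates the ST partitions. Within each class (i)--(iv) the partition is essentially determined by $\mu$, so new pairs arise across classes, the principal cross-case being (iii)$+$(iv). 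A class (iii) partition has shape $(\lambda_1,\lambda_2,\lambda_2)$ or $(\lambda_1,\lambda_2,\lambda_2-1)$, paired against a class (iv) partition of shape $(\lambda_1,\lambda_1,\lambda_3)$ or $(\lambda_1,\lambda_1-1,\lambda_3)$. Matching $\mu$ (itself a hook of $3$) and the one- or two-element predecessor sets (two-row partitions $\tau_u$ or pairs $\{\tau_u,\tau_{u+1}\}$) then produces exactly the four families $(\gamma_u|\lambda_u),(\lambda_u|\gamma_u),(\rho_v|\sigma_v),(\sigma_v|\rho_v)$ making up $B$.

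The main obstacle is $d\ge 4$. Here the plan is to use Proposition \ref{single_char}(iv): the $(t,1)$-predecessors of an ST $\lambda$ of type $\mu$ are themselves ST with types the $1$-predecessors of $\mu$. The key rigidity is that the predecessor(s) of a class (iii) partition lie again in class (iii) (now of degree $d-1$), and similarly for classes (i), (ii), (iv); so matching predecessor sets across, say, (iii)$+$(iv) would force the shared predecessors to lie in the intersection of (iii) and (iv), which becomes essentially trivial once $d-1\ge 3$. Analogous arguments for (i)$+$(iii), (i)$+$(iv), and (i)$+$(ii) (using that the class (i) predecessors have the explicit form $\mu'+(t-1)^{d-1}$ for $\mu'$ a $1$-predecessor of $\mu$, and dually for class (ii)) dispose of the remaining cross-combinations. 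The delicate point is that in class (iii) a given $\mu$ can be realized by several partitions, distinguished by the trivial-extension parameter $c$; but the predecessor set encodes $c$ uniquely, so no two such partitions share predecessors, and no distinct pair $\gamma\ne\lambda$ survives for $d\ge 4$.
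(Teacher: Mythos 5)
Your overall strategy---reduce to the predecessor criterion, split according to the four classes of Theorem \ref{mainthm}, use duality and the symmetry $\gamma\leftrightarrow\lambda$ to cut down cases---is the same skeleton as the paper's proof, and your observation that predecessors of a class~(i) (resp.\ (iii)) partition stay in class~(i) (resp.\ (iii)), with (ii) and (iv) following by duality, is correct (though you do not prove it). The genuine gap is in the step that is supposed to kill the cross-class pairs for $d\ge 4$: the claim that the intersection of two classes in degree $d-1\ge 3$ ``becomes essentially trivial'' is false as stated. For instance $E_3^t\cap E_4^t$ in degree $e\ge 3$ consists of all near-rectangles $\alpha$ with $\alpha_2=\dots=\alpha_{e-1}\in\{\alpha_1,\alpha_1-1\}$ and $\alpha_e\ge\alpha_2-1$, and $E_1^t\cap E_2^t$ contains every partition of $et$ with all parts in $[t-1,t+1]$; these are nonempty families, not single shapes. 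Moreover, even granting that all common predecessors lie in such a family, this does not by itself produce a contradiction: you would still have to show that a class~(iii) $\gamma$ and a class~(iv) $\lambda$ of degree $d$ all of whose predecessors are near-rectangles cannot share their \emph{entire} predecessor sets while having the same $\mu$. The paper closes these cases with sharper, more concrete arguments --- e.g.\ exhibiting one explicit predecessor of $\gamma$ (with $\gamma'_{d-1}=\gamma_d$, or with fewer than $d-1$ rows) that cannot be a predecessor of $\lambda$, or the numerical estimate that a $t$-admissible hook with fewer than $d$ rows has $\gamma_1>dt-d+1\ge 4t-3$, forcing $\lambda_1>3t-3$ and hence $dt>(d-1)(3t-3)$, impossible for $d\ge 3$, $t\ge 2$. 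Some quantitative input of this kind is unavoidable; the purely qualitative ``classes are preserved under taking predecessors'' argument does not finish.

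A second, smaller gap is the degree~$3$ case, which is where the set $B$ actually comes from. You correctly identify (via Lemma \ref{cubics}) that the relevant pairs arise from matching a class~(iii) shape $(\lambda_1,\lambda_2,\lambda_2)$ or $(\lambda_1,\lambda_2,\lambda_2-1)$ against a class~(iv) shape, but the verification that matching $\mu$ and the predecessor sets yields \emph{exactly} the families $(\gamma_u|\lambda_u)$, $(\rho_v|\sigma_v)$ and their transposes --- and nothing else, e.g.\ no pair of two distinct class~(iii) cubics sharing $\mu=(2,1)$ --- is asserted rather than carried out. The paper does not reprove this either; it invokes \cite[Proposition 3.16]{BCV}, which is a nontrivial computation. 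Your write-up should either perform that enumeration or cite it explicitly.
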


\begin{proof}
For $t=1$ there is nothing to prove because $J_1=(0)$. So
assume $t\geq 2$.

From what said above $\gamma$ and $\lambda$ must be
$t$-admissible partitions of the same number $dt$. If $d=1$
then $\gamma=\lambda=(t)$; if $d=2$, then $(J_t)_2\cong
\bigoplus_{(\gamma|\lambda)\in A}L_{\gamma}V\tensor
L_{\lambda}W^*$  by \cite[Lemma 2.1]{BCV}; if $d=3$ then
\cite[Proposition 3.16]{BCV} does the job.

So from now on we will focus on $d\geq 4$. Recall that in
Theorem \ref{mainthm} have been identified 4 (not disjoint)
sets of diagrams, say $E_1^t=\{\mbox{diagrams as in (i)}\}$,
$E_2^t=\{\mbox{diagrams as in (ii)}\}$ and so on, such that:
$$
\{\mbox{diagrams of single $\Bw^t$-type}\}=
E_1^t\cup E_2^t\cup E_3^t\cup E_4^t.
$$

We start by showing the following:
\smallskip
\begin{lemma}\label{sameclass}
Let $L_{\gamma}V\tensor L_{\lambda}W^*$ be a direct summand of
$J_t\tensor_{S_t}K$ such that both $\gamma$ and $\lambda$ are
diagrams of single $\Bw^t$-type belonging to the same $E_i^t$
for some $i\in\{1,2,3,4\}$. Then $(\gamma|\lambda)\in A\cup B$.
\end{lemma}

\begin{proof}[Proof of Lemma \ref{sameclass}]
We know that $\gamma$ and $\lambda$ are different
$t$-admissible partitions of $dt$ sharing the same $\mu\vdash
d$. This excludes $i\in \{1,2\}$, because in these cases
Theorem \ref{mainthm} says that $\gamma$ and $\lambda$ cannot
share the same $\mu$ if they are different.

Suppose $i=3$. We must have $\gamma_d\neq \lambda_d$ if
$\lambda$ and $\gamma$ belong to the same $\mu$. Assume
$\lambda_d > \gamma_d$. The diagram $\gamma$ has a predecessor
$\gamma'$ with $\gamma'_{d-1}=\gamma_d$. This cannot be a
predecessor of $\lambda$, and so $\gamma$ and $\lambda$ do not
have the same predecessors.

So only the case $i=4$ remains. Let
$s=\max\{\gamma_1,\lambda_1\}$. If $s=t$, then $\gamma =
\lambda = (t^d)$, so we can assume $s>t$. If $\gamma$ and
$\lambda$ share the same $\mu$, by combining Propositions
\ref{dualization} and \ref{propretract} $\gamma^{*,s}$ and
$\lambda^{*,s}$ share $\mu$ as well. Of course $\gamma^{*,s}$
and $\lambda^{*,s}$ belong to $E_3^{s-t}$, and they are
different if $\gamma$ and $\lambda$ are different. In this
case,  we know by the previous case that $\gamma^{*,s}$ and
$\lambda^{*,s}$ have different predecessors, and by dualizing
we infer the same for $\gamma$ and $\lambda$.
\end{proof}

Let us go ahead with the proof of Theorem \ref{singleTyperel}.
Set $\gamma=(\gamma_1,\ldots ,\gamma_h)$ and
$\lambda=(\lambda_1,\ldots ,\lambda_k)$ with $h,k\leq d$. If
$h=k=d$ we can use induction on $t$ since both $\gamma$ and
$\lambda$ are trivial extensions.

If $h$ and $k$ are both less than $d$, then neither $\gamma$
nor $\lambda$ belong to $E_1^t$. Assume that $\gamma \in
E_3^t$. Then $\mu$ is a hook. By the lemma, $\lambda\notin
E_3^t$. Since $\mu$ is a hook, $k<d$ and $\lambda \notin
E_3^t$, it follows that $\lambda\in E_4^t$ (recall that $E_2^t$
and $E_4^t$ are not disjoint). Because $\gamma$ is a
$t$-admissible hook and $h<d$, we get $\gamma_1>dt-d+1\geq
4t-3$. Then $\lambda_1>3t-3$, otherwise $\gamma$ and $\lambda$
would have different predecessors. Therefore $\lambda\vdash dt
> (d-1)(3t-3)$, which is impossible whenever $d\geq 3$ (recall
that $t\geq 2$). So, by symmetry, we can assume that neither
$\gamma$ nor $\lambda$ is in $E_3^t$. Therefore $\gamma$ and
$\lambda$ belong to $E_2^t\cup E_4^t$. However $\gamma$ and
$\lambda$ share the same $\mu$ and, in such a situation, $\mu$
is a hook if and only if $\gamma$ and $\lambda$ both belong to
$E_4^t$, a case already excluded in the lemma.

So, we can assume by symmetry that $h<d$ and $k=d$. Notice that
$h=d-1$, because all the predecessors of $\lambda$ will have
$d-1$ rows. For the same reason we can even infer that
$\gamma_{d-1}>t$, otherwise we could entirely remove
$\gamma_{d-1}$ getting a predecessor of $\gamma$ with $d-2$
rows. Since $d\geq 4$, we have $\gamma_2'>t$ for all $\gamma'$
predecessors of $\gamma$. So $\lambda$ does not belong to
$E_3^t$, since in this case $\lambda_2\leq t$. Since
$\gamma_{d-1}>t$, Theorem \ref{mainthm} tells us that
$\gamma\in E_4^t$ (once again, recall that $E_2^t$ and $E_4^t$
are not disjoint): so $\mu$ must be a hook. If $\lambda\in
E_1^t$, then $\gamma_{d-1}\geq 2t-1$ (otherwise $\gamma$ would
have a predecessor $\gamma'$ with $\gamma'_{d-1}<t-1$, that
cannot be a predecessor of $\lambda$). This is evidently
impossible if $d\geq 4$. So Theorem \ref{mainthm} implies that
$\lambda\in E_4^t$, and the lemma lets us conclude.

\end{proof}

\end{document}